\newcommand{\ov}[1]{\overline{#1}}
\newtheorem{theorem}{Theorem}[section]
\newtheorem{lemma}[theorem]{Lemma}
\newtheorem{corollary}[theorem]{Corollary}
\theoremstyle{definition}
\newtheorem{definition}[theorem]{Definition}
\numberwithin{equation}{section}
\newcommand{\N}{\mathbb N}
\newcommand{\AAA}[1]{\mathscr{A}_{#1}}
\newcommand{\AAAi}[1]{\widetilde{\mathscr{A}}_{#1}}
\newcommand{\aaa}{\textbf{a}}
\newcommand{\Om}{\Omega}
\newcommand{\om}{\omega}
\newcommand{\omz}{\underline{\omega}}
\newcommand{\Omi}{\widetilde{\Omega}}
\newcommand{\Omii}{\Omega'}
\newcommand{\aaai}{\widetilde{\aaa}}
\newcommand{\aaaii}{\aaa'}
\newcommand{\Yo}{U}
\newcommand{\yo}{u}
\newcommand{\pcp}[3]{#1 \langle #2 \rangle \amalg #1 [#3]}
\newcommand{\slo}[1]{{\rm sl}_{#1}(F)}
\newcommand{\lama}[3]{\lambda_{#1, #3}^{#2}}
\newcommand{\lamt}[2]{\lambda_{#1}^{#2}}
\newcommand{\Ev}[2]{{\rm Ev}_{#1 ; #2}}
\newcommand{\lngth}[1]{|#1|}
\newcommand{\ins}[3]{#1 \triangleleft_{#2} #3}
\newcommand{\minfty}[1]{M_\infty(#1)}
\newcommand{\sinfty}[1]{{\rm sl}_\infty(#1)}
\DeclareMathOperator{\diag}{diag}
\DeclareMathOperator{\tr}{tr}
\title[Images of mult. poly.]{Images of multilinear polynomials in the algebra of finitary matrices contain  trace zero matrices}
\author{Daniel Vitas}
\address{Department of Mathematics, Faculty of Mathematics and Physics,  University of Ljubljana, Slovenia}
\email{daniel.vitas@student.fmf.uni-lj.si}
\thanks{\emph{Mathematics Subject Classification} (2020). 16R99,  16W25}
\keywords{Multilinear polynomial, finitary matrices,  L'vov-Kaplansky conjecture.}
\begin{document}

\begin{abstract} Let $F$ be an infinite field and let $f$ be a nonzero multilinear  polynomial with coefficients in $F$. We prove that for every positive integer $d$ there exists
a positive integer  $s$ such that $f(M_{s}(F))$, the image of $f$ in  $M_{s}(F)$, contains all trace zero $d\times d$ matrices. In particular, the image of $f$ in the algebra of all finitary matrices contains all trace zero finitary matrices.
\end{abstract}

\maketitle

\section{Introduction}

Let $F$ be a field. By $F \langle X_1,\ldots,X_n \rangle$ we denote the free algebra in $X_i$ over $F$; its elements are called \emph{noncommutative polynomials}. The \emph{image} of a noncommutative polynomial $f \in F \langle X_1,\ldots,X_n \rangle$ in the $F$-algebra $A$ is the set
\[
	f(A) = \{f(a_1,\ldots,a_n) \mid a_1,\ldots,a_n \in A\} \text{.}
\]

We say that $f$ is \emph{multilinear} if it is of the form
\[
	f = \sum_{\sigma \in S_n} \lambda_{\sigma} X_{\sigma(1)} \ldots X_{\sigma(n)}
\]
for some $\lambda_\sigma \in F$.
The L'vov-Kaplansky conjecture states that the image of a multilinear polynomial on the matrix algebra $M_d(F)$ is a vector space---in fact, since the linear span of the image of a polynomial is a Lie ideal, it can only be one of the four vector spaces: $\{0\}$, the space of scalar matrices $F$, the space of trace zero matrices $\slo{d}$, or the whole algebra $M_d(F)$. In \cite{KBMR}, Kanel-Belov, Malev, and Rowen proved this conjecture for  $2\times 2$ matrices (over a quadratically closed field $F$). Although  extensively studied by several mathematicians, the conjecture is at present unsolved even  in the $3 \times 3$ case. We refer the reader to \cite{RYMKB} for a detailed survey of the results regarding the L'vov-Kaplansky conjecture and its variations. 

In \cite{Vi}, the  author proved that if  $A$ is an algebra having a surjective inner derivation, then $f(A)=A$ for every nonzero multilinear polynomial. Since an example of such an algebra $A$ is 
End$_F(V)$, the algebra of endomorphisms of an infinite-dimensional vector space $V$, this result can be viewed as the solution of an infinite-dimensional version of the L'vov-Kaplansky conjecture.
In the present paper, we will use some ideas from \cite{Vi} to establish the following theorem, which still involves an algebra that is larger than one would wish, but nevertheless brings us closer to the classical L'vov-Kaplansky conjecture.

\begin{theorem} \label{lep izrek}
	Let $F$ be an infinite field and let $d \in \N$. For every nonzero multilinear polynomial $f$ there exists an $s \in \N$ such that $\slo{d} \subseteq f(M_{s}(F))$. 
\end{theorem}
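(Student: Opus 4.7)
The plan is to adapt the recursive construction from \cite{Vi}. There, working in an algebra $A$ that admits a surjective inner derivation $\mathrm{ad}(c) = [c, \cdot]$, the author shows that for every nonzero multilinear polynomial $f$ of degree $n$ and every target $y \in A$, one can recursively build elements $a_1, \ldots, a_n \in A$ with $f(a_1, \ldots, a_n) = y$, at each step inverting $\mathrm{ad}(c)$ on a suitable intermediate element. Applied to $A = \mathrm{End}_F(V)$ with $V$ infinite-dimensional, this yields $f(A) = A$.

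For Theorem~\ref{lep izrek}, I would embed $\slo{d}$ as the top-left corner of $\mathrm{End}_F(V)$ and apply the construction of \cite{Vi} to an arbitrary target $T$ in this corner. The key observation to extract is that only finitely many matrix units of $\mathrm{End}_F(V)$ are used in the recursion, and that the number of units is bounded \emph{uniformly} in $T$, depending only on $d$ and $n$. Granting this, all of the resulting inputs $a_i$ lie in some $M_s(F)$ with $s = s(n,d)$, which gives the statement.

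Concretely, Step~1 is to inspect the recursion in \cite{Vi} and quantify it: at each of the $n$ recursive steps, the support of the intermediate element grows by a controlled amount, starting from the $d \times d$ support of $T$. This should give $s$ of order polynomial in $n$ and $d$. Step~2 is to verify that the truncated construction, using a finite shift-type element $c' \in M_s(F)$ in place of $c$, still goes through---all partial inverses of $\mathrm{ad}(c')$ needed at each recursive step must continue to exist when restricted to the relevant subspaces. The infinitude of $F$ is invoked here to choose scalar parameters avoiding a finite union of algebraic conditions.

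The main obstacle will be controlling the truncation. On $M_s(F)$ the inner derivation $\mathrm{ad}(c')$ is never surjective: its image is contained in the trace-zero matrices. Hence every intermediate target produced during the recursion must be arranged to be trace zero in the relevant subblock, and the subspace on which $\mathrm{ad}(c')$ is invertible must contain these targets. Establishing the precise quantitative relationship between $s$, $d$, and $n$, and controlling the traces and supports of the intermediate elements throughout the recursion, will likely require a substantive refinement of the arguments from \cite{Vi}, rather than a routine transcription.
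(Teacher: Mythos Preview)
Your plan correctly locates the heart of the matter---the non-surjectivity of inner derivations on $M_s(F)$---but it does not contain a mechanism for overcoming it, and this is exactly where the work lies. In the recursion of \cite{Vi}, the inputs $a_i$ produced at each step are built from the target \emph{and from the distinguished element $c$ itself}; since any such $c$ on $\mathrm{End}_F(V)$ is necessarily non-finitary, the claim that ``only finitely many matrix units are used, uniformly in $T$'' is not true as stated. Replacing $c$ by a truncated shift $c'\in M_s(F)$ is therefore forced, but then, as you note, every intermediate target must land in the image of $\mathrm{ad}(c')$. You offer no reason why the recursion should produce trace-zero intermediates, and in fact it need not: already for $f=X_1X_2$, if one tries to write $T=[c',x_1]\cdot x_2$ or similar, the factor one must hit with $\mathrm{ad}(c')$ has no a priori trace constraint. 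So Step~2 of your plan is not a verification but the entire problem.

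The paper bypasses this obstacle by a different inductive scheme. Rather than fixing a single $c'$ and inverting $\mathrm{ad}(c')$ at each step, it enlarges the class of polynomials to \emph{admissible partially commutative polynomials} (products of iterated commutators $[U_{\omega_1},\ldots,U_{\omega_k},X_i]$ with the $U_\omega$ formal commuting variables) and proves the stronger Theorem~\ref{mt} by induction on the number of noncommuting variables. The induction step (Lemma~\ref{x v y}) is a concrete block substitution: one passes from $A$ to $M_{k+1}(A)$, sends $X_n$ to $u_n e_{k+1,1}$, and sends $k$ of the $U_\omega$ to a fixed cyclic permutation matrix $v$; Lemma~\ref{ikomlema} shows $[\,v,\ldots,v,\,u_n e_{k+1,1}\,]=\diag(u_n,\ast,\ldots,\ast)$, so $X_n$ is converted into a new commuting variable $U_n$ at the cost of multiplying the matrix size by $k+1$. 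No inversion of an inner derivation on a finite matrix algebra is ever required during the reduction; the only place $\mathrm{ad}$ is inverted is in the base case $n=1$ (Lemma~\ref{baza}), where the target is genuinely trace zero and, after passing to $M_{d+1}(F)$ via Lemma~\ref{zabavna lema}, is similar to a hollow matrix, on which $\mathrm{ad}$ of a diagonal matrix with distinct entries is easily inverted. This is the ``substantive refinement'' you anticipate, and it is not close to a routine truncation of \cite{Vi}.
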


Here, an embedding of a smaller matrix algebra into a larger one should be understood as
\[
M_d(F) = \begin{pmatrix} M_d(F) & 0 \\ 0 & 0 \end{pmatrix} \subseteq M_s(F) \text{.}
\]
This theorem is also related to Mesyan's \cite{Me} weaker form of the L'vov-Kaplansky conjecture, which states that  
$\slo{d} \subseteq f(M_d(F))$ provided that $d$ is larger than $n$, the degree of $f$ (for $n=4$ this was proved in \cite{BW}).

Let $\minfty{F}$ denote the algebra of all {\em finitary matrices}, i.e., (countably) infinite matrices with only finitely many nonzero entries. Denoting by $\sinfty{F}$ the space 
of all trace zero finitary matrices, we thus have the following corollary to Theorem \ref{lep izrek}.

\begin{corollary} \label{posledica} 	Let $F$ be an infinite field and let $f$ be a nonzero multilinear polynomial. Then 
 $ \sinfty{F}  \subseteq f(\minfty{F})\text{.}$
\end{corollary}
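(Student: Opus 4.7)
The plan is to deduce Corollary \ref{posledica} directly from Theorem \ref{lep izrek}, using only the standard upper-left corner embeddings of finite matrix algebras into $\minfty{F}$.

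Fix any $A \in \sinfty{F}$. Since $A$ is finitary, only finitely many of its entries are nonzero, so there is a $d \in \N$ such that all nonzero entries of $A$ lie in the upper-left $d \times d$ block. Under the embedding
\[
M_d(F) = \begin{pmatrix} M_d(F) & 0 \\ 0 & 0 \end{pmatrix} \subseteq \minfty{F},
\]
the matrix $A$ corresponds to an honest $d \times d$ matrix, and because $\tr(A) = 0$ (just the finite sum of its nonzero diagonal entries), this $d \times d$ matrix lies in $\slo{d}$.

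Next I would apply Theorem \ref{lep izrek} to this $d$ and the given polynomial $f$. This produces an $s \in \N$ (which we may assume satisfies $s \geq d$) together with matrices $a_1, \ldots, a_n \in M_s(F)$ with $f(a_1, \ldots, a_n) = A$, where $A$ is viewed in $M_s(F)$ via the block embedding $M_d(F) \subseteq M_s(F)$ described in the paper. Composing with the analogous embedding $M_s(F) \subseteq \minfty{F}$, which is an $F$-algebra homomorphism, and using that polynomial evaluation is preserved by algebra homomorphisms, we obtain $f(a_1, \ldots, a_n) = A$ in $\minfty{F}$. Hence $A \in f(\minfty{F})$, and since $A \in \sinfty{F}$ was arbitrary, the corollary follows.

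There is no genuine obstacle here: everything reduces to checking that the composite block embedding $M_d(F) \subseteq M_s(F) \subseteq \minfty{F}$ agrees with the direct embedding $M_d(F) \subseteq \minfty{F}$, which is immediate from the definition of the embeddings. The whole content of the corollary is packed into Theorem \ref{lep izrek}; the finitary setting is just a way of stating the conclusion uniformly in $d$.
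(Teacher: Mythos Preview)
Your proposal is correct and matches the paper's approach: the paper states the corollary as an immediate consequence of Theorem \ref{lep izrek} without giving a separate proof, and the argument you spell out (pick $d$ so that $A$ is supported in the top-left $d\times d$ block, apply Theorem \ref{lep izrek} to obtain $s$, then use the upper-left embedding $M_s(F)\hookrightarrow \minfty{F}$) is exactly the intended one-line deduction.
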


If the polynomial $f$ is a sum of commutators, the reverse inclusion also holds.
If not, then it is easy to see that   $f(\minfty{F})$ contains a matrix $a$ with nonzero trace (see, e.g., \cite[Theorem 4.5]{BK}).
 Since $x-\frac{\tr(x)}{\tr(a)} a \in \sinfty{F}$ for every $x \in \minfty{F}$, it follows from Corollary \ref{posledica} that 
every matrix in $ \minfty{F}$ is a sum of two matrices from 
$f(\minfty{F})$. This result is in the spirit of  \cite{B}.
We conjecture, however, that it  is not optimal and that  $f(\minfty{F})$ is actually equal to the whole $\minfty{F}$ 
(and hence, in particular, $f(\minfty{F})$ is a vector space for every multilinear polynomial $f$). We leave this as an open problem.

At first glance, the paper is very similar to \cite{Vi}, with some parts being almost identical. However, the problem studied here is more subtle, so nontrivial modifications had to be made.
It should be  remarked that the general setting, introduced in Section \ref{s2}, is indeed similar, but not equal to that in \cite{Vi}. Section \ref{s3} is devoted to proving
Theorem \ref{mt} which is slightly more general than Theorem \ref{lep izrek}.

\section{Admissible partially commutative polynomials} \label{s2}

Let $F$ be a field, let $n \in \N$, and let $\Omega = \{ \omega_1,\ldots, \omega_m\} \subseteq \N \setminus \{1,\ldots,n\}$ be a finite set (the reason for such notation is that we will change $\Omega$ in the course of proof). By
\[
	\pcp{F}{X_1,\ldots,X_n}{U_\omega \mid \omega \in \Omega}
\]
we denote the coproduct (see, e.g., \cite[Section 1.4]{Rings}) of the free algebra $F \langle X_1,\ldots,X_n\rangle$ and the algebra of commutative polynomials $F[U_\omega \mid \omega \in \Omega]$; we call its elements \emph{partially commutative polynomials}. We may think of them as polynomials in variables $X_i, U_\omega$, where $U_\omega$ commute among themselves, but do not commute with $X_i$.

Let $A$ be a unital algebra over $F$, let $x_1,\ldots,x_n \in A$, and let $u_{\omega_1},\ldots,u_{\omega_m} \in A$ be elements that commute among themselves. By
\[
\Ev{x_1, \ldots, x_n}{u_{\omega_1},\ldots,u_{\omega_m}}: \pcp{F}{X_1,\ldots,X_n}{U_\omega \mid \omega \in \Omega} \rightarrow A
\]
denote the algebra homomorphism sending $X_i$ to $x_i$ and $U_\omega$ to $u_\omega$.
Since the above map extends the standard evaluation homomorphisms of $F \langle X_1,\ldots,X_n\rangle$ and $F[U_\omega \mid \omega \in \Omega]$, respectively, its existence follows from the universal property of the coproduct.

For a partially commutative polynomial $f \in \pcp{F}{X_1,\ldots,X_n}{U_\omega \mid \omega \in \Omega}$, define its \emph{image} in the algebra $A$ as
\[
	f(A) = \left\{\Ev{x_1,\ldots,x_n}{\yo_{\om_1},\ldots,\yo_{\om_m}}(f) \mid x_i, \yo_{\om_j} \in A,~ \yo_{\omega_j} \yo_{\omega_k} = \yo_{\omega_k} \yo_{\omega_j} ~\text{for all $j,k$} \right\} \text{.}
	\]
Note that in the case where $f$ is a noncommutative polynomial, i.e., an element of the subalgebra $F \langle X_1,\ldots,X_n\rangle$ of $\pcp{F}{X_1,\ldots,X_n}{U_\omega \mid \omega \in \Omega}$, this notion of the image of $f$ coincides with the standard one.

We will say that the  sequences $\aaa^i = (\aaa_1^i,\ldots,\aaa_{k_i}^i)$, $i=1,\ldots,n$, containing the elements from $\Om$  form a \emph{partition} of $\Om$ if they are strictly increasing (i.e., $\aaa_1^i < \ldots < \aaa_{k_i}^i$) and, for each $\om \in \Om$, there exist uniquely determined $i$ and $j$ such that $\aaa^i_j = \om$. By $\lngth{\aaa^i}$ we denote the length of the sequence, i.e., $k_i$.
We define $\AAA{\Om}$ as the set of all $n$-tuples of sequences
$\aaa = (\aaa^1,\ldots,\aaa^n)$
such that $\aaa^1,\ldots,\aaa^n$ form a partition of $\Om$.

As usual, we write 
\[
[x_1,x_2] = x_1 x_2 - x_2 x_1
\]
for the commutator of the elements $x_1$ and $x_2$. More generally, we write
\[
[x_1, x_2, \ldots, x_n] = [x_1, [x_2, \ldots, x_n]] \text{.}
\]
For any $\aaa \in \AAA{\Om}$, we define
\[
X^\aaa_i = [\Yo_{\aaa_1^i}, \ldots, \Yo_{\aaa_{k_i}^i}, X_i] 
\]
if $k_i>0$ and $X_i^\aaa = X_i$ if $k_i = 0$.
We extend this definition by setting
\begin{equation*}
	\left( X_{i_1} X_{i_2} \ldots X_{i_k}\right)^{\aaa} = X_{i_1}^{\aaa} X_{i_2}^{\aaa} \ldots  X_{i_k}^{\aaa} \text{,}
\end{equation*}
for all $i_1,\ldots,i_k \in \{1,\ldots, n\}$.

We generalize the notion of a multilinear polynomial  as follows.

\begin{definition}
	A partially commutative polynomial
	\[
	f \in \pcp{F}{X_1,\ldots,X_n}{\Yo_\om \mid \om \in \Om} 
	\]
	is {\em admissible}
	if it is of the form 
	\begin{equation*}
		f = \sum_{\sigma \in S_n} \sum_{\aaa \in \AAA{\Omega}} \lamt{\sigma}{\aaa} \left( X_{\sigma(1)} X_{\sigma(2)} \ldots X_{\sigma(n)} \right)^{\aaa}
	\end{equation*}
	for some  $\lamt{\sigma}{\aaa}\in F$. 
\end{definition}

Multilinear noncommutative polynomials are exactly admissible partially commutative polynomials for $\Omega = \emptyset$. 
To give a different example, let $n=2$ and $\Omega = \{3,4\}$. Then
$$\AAA{\Om} = \{ ((3,4), \emptyset),~((3), (4)),~((4),(3)),~(\emptyset,(3,4)) \} \text{,}$$
and admissible polynomials in  $ \pcp{F}{X_1,X_2}{\Yo_3,\Yo_4} $ are of the form
\begin{align*}
	f &=  \lamt{{\rm id}}{((3,4), \emptyset)} [\Yo_3, \Yo_4, X_{1}] X_{2} + \lamt{(12)}{((3,4), \emptyset)} X_{2} [\Yo_3, \Yo_4, X_{1}] \\
	&+  \lamt{{\rm id}}{((3), (4))} [\Yo_3, X_{1}] [\Yo_4, X_{2}] + \lamt{(12)}{((3), (4))} [\Yo_4, X_{2}] [\Yo_3, X_{1}] \\
	&+  \lamt{{\rm id}}{((4),(3))} [\Yo_4, X_{1}] [\Yo_3, X_{2}] + \lamt{(12)}{((4),(3))} [\Yo_3, X_{2}] [\Yo_4, X_{1}] \\
	&+  \lamt{{\rm id}}{(\emptyset,(3,4))} X_{1} [\Yo_3, \Yo_4, X_{2}] + \lamt{(12)}{(\emptyset,(3,4))} [\Yo_3, \Yo_4, X_{2}] X_{1}\text{.} \qedhere
\end{align*}

By definition, the vector space of admissible partially commutative polynomials is linearly spanned by $\left( X_{\sigma(1)}  \ldots X_{\sigma(n)} \right)^{\aaa}$. The following lemma states that these elements actually form its basis. Its proof is very similar to that of \cite[Proposition 2.4]{Vi}, so we omit it.

\begin{lemma}
	\label{lin neodvisnost}
	For any $n \in \N$ and a finite set $\Om \subseteq \N \setminus \{1,\ldots,n\}$,
	\[
	\left\{ \left( X_{\sigma(1)} X_{\sigma(2)} \ldots X_{\sigma(n)} \right)^{\aaa} \mid \sigma \in S_n, ~\aaa \in \AAA{\Om} \right\}
	\]
	is a linearly independent set.
\end{lemma}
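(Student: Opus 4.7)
The plan is to exhibit, for each pair $(\sigma, \aaa)$, a distinct ``leading monomial'' of $\left( X_{\sigma(1)} \cdots X_{\sigma(n)}\right)^{\aaa}$ under a suitable monomial order, and deduce linear independence from the usual triangularity argument. View the coproduct $\pcp{F}{X_1,\ldots,X_n}{\Yo_\om \mid \om \in \Om}$ as the free associative $F$-algebra on the alphabet $\{X_i\} \cup \{\Yo_\om\}$ modulo the relations $\Yo_\om \Yo_{\om'} = \Yo_{\om'} \Yo_\om$; an $F$-basis then consists of all alternating words $p_0 X_{i_1} p_1 \cdots X_{i_r} p_r$, where each $p_j$ is a commutative monomial in the $\Yo_\om$'s written in increasing order of $\om$. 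I would order the alphabet by $\Yo_{\om_1} < \cdots < \Yo_{\om_m} < X_1 < \cdots < X_n$, and then order the basis words first by total length and, within a fixed length, lexicographically.

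A routine induction on $k_i$ gives
\[
	X_i^{\aaa} = \sum_{S \subseteq \{1,\ldots,k_i\}} (-1)^{k_i - |S|} \Bigl(\prod_{j \in S} \Yo_{\aaa^i_j}\Bigr) X_i \Bigl(\prod_{j \notin S} \Yo_{\aaa^i_j}\Bigr) \text{,}
\]
each summand being a basis monomial of total length $k_i + 1$. Since $\Yo_\om < X_i$ for every $\om$ and $i$, the lex-largest summand is the one with $S = \emptyset$, namely $(-1)^{k_i} X_i \Yo_{\aaa^i_1} \cdots \Yo_{\aaa^i_{k_i}}$. Multiplying out $\left( X_{\sigma(1)} \cdots X_{\sigma(n)}\right)^{\aaa} = \prod_{j=1}^n X_{\sigma(j)}^{\aaa}$ and collecting terms produces a sum of basis monomials of the form $\Yo_{B_0} X_{\sigma(1)} \Yo_{B_1} \cdots X_{\sigma(n)} \Yo_{B_n}$, where the blocks are controlled by subsets $S_j \subseteq \aaa^{\sigma(j)}$ via $B_0 = S_1$, $B_j = (\aaa^{\sigma(j)} \setminus S_j) \cup S_{j+1}$ for $1 \leq j \leq n-1$, and $B_n = \aaa^{\sigma(n)} \setminus S_n$. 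A greedy left-to-right minimization---enabled by the pairwise disjointness of the $\aaa^{\sigma(j)}$---forces first $|B_0| = 0$ (so $S_1 = \emptyset$), then $|B_1| = k_{\sigma(1)} + |S_2|$ minimal (so $S_2 = \emptyset$), and inductively $S_j = \emptyset$ for every $j$. This identifies the unique lex-maximal term, hence the leading monomial,
\[
	M_{\sigma,\aaa} = X_{\sigma(1)} \Yo_{\aaa^{\sigma(1)}_1} \cdots \Yo_{\aaa^{\sigma(1)}_{k_{\sigma(1)}}} X_{\sigma(2)} \cdots X_{\sigma(n)} \Yo_{\aaa^{\sigma(n)}_1} \cdots \Yo_{\aaa^{\sigma(n)}_{k_{\sigma(n)}}} \text{,}
\]
with coefficient $(-1)^{|\Om|} \neq 0$.

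The map $(\sigma, \aaa) \mapsto M_{\sigma,\aaa}$ is injective: $\sigma$ is recovered from the order in which the $X$'s appear in $M_{\sigma,\aaa}$, and each $\aaa^{\sigma(j)}$, being a strictly increasing sequence, is determined by the set of $\Yo$-indices in the block immediately after $X_{\sigma(j)}$. Distinct $(\sigma, \aaa)$ therefore give distinct leading monomials, and a hypothetical nontrivial relation $\sum_{\sigma,\aaa} \lamt{\sigma}{\aaa} \left( X_{\sigma(1)} \cdots X_{\sigma(n)}\right)^{\aaa} = 0$ would force the lex-largest $M_{\sigma,\aaa}$ with $\lamt{\sigma}{\aaa} \neq 0$ to survive with nonzero coefficient $(-1)^{|\Om|} \lamt{\sigma}{\aaa}$, a contradiction. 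The only mildly delicate point is the lex-maximality verification in the second paragraph, but it reduces to the clean block-by-block check sketched above.
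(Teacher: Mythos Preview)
Your argument is correct. The paper actually omits the proof of this lemma, referring instead to \cite[Proposition~2.4]{Vi}, so there is no in-paper argument to compare against directly; your leading-monomial/triangularity approach is a clean, self-contained way to handle it. The one step you flag as ``mildly delicate''---that $S_1=\cdots=S_n=\emptyset$ yields the lex-maximal monomial---does check out: assuming $S_1=\cdots=S_{j-1}=\emptyset$ and $S_j\neq\emptyset$, the block $B_{j-1}$ becomes $\aaa^{\sigma(j-1)}\cup S_j\supsetneq\aaa^{\sigma(j-1)}$, and since the $i$th smallest element of a superset is at most the $i$th smallest of the subset, the first position where the two words differ carries either a strictly smaller $U$-letter or a $U$ versus $X_{\sigma(j)}$; in either case the word with $S_j=\emptyset$ is lex-larger. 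Combined with the injectivity of $(\sigma,\aaa)\mapsto M_{\sigma,\aaa}$ and the observation that every monomial in the expansion of $(X_{\tau(1)}\cdots X_{\tau(n)})^{\bbb}$ is $\le M_{\tau,\bbb}$, the usual ``pick the largest surviving leading term'' argument goes through exactly as you wrote.
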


\section{Main theorem}\label{s3}

Let $F$ be an infinite field and let $d \in \N$.
In this section we will prove Theorem \ref{mt}, which states that for any nonzero admissible partially commutative polynomial $f$ there exists an $s \in \N$ such that $\slo{d} \subseteq f(M_{s}(F))$.

The proof of the theorem is by induction on the number of noncommuting variables $X_1,\ldots,X_n$. Before considering the base case, we prove a lemma which slightly extends the well-known fact 
that,  
 in characteristic $0$,  every trace zero matrix is similar to a \emph{hollow matrix}, i.e., a matrix having only zeros on the diagonal (see, e.g., \cite[Proposition 1.8]{Am}). This is no longer true
if $F$ has prime characteristic. Indeed, the identity matrix can have trace zero, but obviously is not similar to a hollow matrix. However, we can redeem this by increasing the size of matrices.

\begin{lemma} \label{zabavna lema}
	For any $d \in \N$ and every matrix $a \in \slo{d}$ there exists an invertible matrix $p \in M_{d+1}(F)$ such that $pap^{-1} \in M_{d+1}(F)$ is a hollow matrix.
\end{lemma}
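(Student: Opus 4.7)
The plan is to reduce the statement to a Fillmore-type result for trace-zero matrices, with the embedding of $a$ into the next larger size serving precisely to sidestep a well-known characteristic-$p$ obstruction. Set $A = \begin{pmatrix} a & 0 \\ 0 & 0 \end{pmatrix} \in M_{d+1}(F)$. Then $\tr(A) = \tr(a) = 0$, and, crucially, the entry $A_{d+1,d+1} = 0$ forces $A$ to \emph{not} be a nonzero scalar multiple of the identity. This observation is essential, because in prime characteristic the classical statement ``every zero-trace matrix is similar to a hollow matrix'' fails precisely for nonzero scalar matrices of trace zero (for instance $I_d$ when $\mathrm{char}(F) \mid d$); the extra row and column engineer $A$ out of this exceptional class.

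With $A$ so prepared, I would prove the following Fillmore-type statement: every $M \in M_n(F)$ with $\tr(M) = 0$ that is not a nonzero scalar matrix is similar to a hollow matrix. The argument is by induction on $n$, the case $n = 1$ being trivial. For the inductive step, non-scalarity of $M$ supplies a vector $v \in F^n$ with $v$ and $Mv$ linearly independent (otherwise every vector would be an eigenvector and $M$ would be scalar). Completing $\{v, Mv\}$ to a basis and rewriting $M$ in it makes the first column equal to $e_2$, so the $(1,1)$-entry vanishes; the result has block form $\begin{pmatrix} 0 & \ast \\ \ast & M' \end{pmatrix}$ with $M' \in M_{n-1}(F)$ of trace zero. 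Conjugating by $\diag(1, Q)$, where $Q$ is furnished by the induction hypothesis applied to $M'$, brings $M$ into hollow form.

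The main obstacle is the contingency in which the block $M'$ produced by the reduction is itself a nonzero scalar matrix, so the induction hypothesis cannot be directly applied. I would dispose of this by exploiting the freedom in the choices of $v$ and of the basis completion: the set of $v$ for which the resulting $M'$ happens to be scalar is a proper closed condition, so a generic choice lands $M'$ outside the scalars. For the stubborn residual cases one can insert a preliminary conjugation by an elementary transvection that breaks the scalar structure of the lower block before the reduction, or work with a longer Krylov sequence $v, Mv, M^2 v, \dots$ to zero out more diagonal entries simultaneously. Granted the Fillmore-type statement, it applies to $A$ at once: since $A \ne \alpha I_{d+1}$ for any nonzero $\alpha$, the desired invertible $p \in M_{d+1}(F)$ is exactly the change-of-basis matrix provided by the theorem.
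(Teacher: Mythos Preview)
Your strategy---embed $a$ as $A=\left(\begin{smallmatrix}a&0\\0&0\end{smallmatrix}\right)\in M_{d+1}(F)$ and then invoke the Fillmore-type fact that any trace-zero matrix which is not a nonzero scalar is similar, \emph{within the same size}, to a hollow matrix---differs from the paper's. The paper runs the induction on the lemma itself: for $a\in\slo{d}$ either some $v\in F^{d}$ is not an eigenvector, in which case one peels off a zero $(1,1)$-corner inside $M_d(F)$ and reduces to $b\in\slo{d-1}$, to which the lemma for $d-1$ applies (so $b$ becomes hollow in $M_d(F)$); or $a$ is a scalar matrix, and an explicit basis of $F^{d+1}$ is written down that makes $a$ hollow there. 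The key structural difference is that the paper's induction hypothesis already absorbs the awkward case: even if the block $b$ happens to be scalar, the size-$(d-1)$ hypothesis still applies because it comes with a built-in extra dimension. Your route aims at a sharper intermediate statement but must face the scalar-$M'$ obstruction directly.

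That is exactly where your argument is incomplete. The assertions that a generic $v$ avoids a scalar $M'$, or that a preliminary transvection or a longer Krylov chain repairs matters, are not carried out; in particular $M'$ depends on the whole basis completion, not only on $v$, so the phrase ``proper closed condition'' in $v$ is not well posed as stated. A concrete repair: if after your reduction the matrix reads $\left(\begin{smallmatrix}0&r^{\intercal}\\c&\alpha I_{n-1}\end{smallmatrix}\right)$ with $\alpha\neq 0$ and $c=e_1$ (forced since the first column is $e_2$), conjugate by $\left(\begin{smallmatrix}1&s^{\intercal}\\0&I\end{smallmatrix}\right)$ for some $s\in F^{n-1}$ with $s_1=0$ and $s\neq 0$; this is possible because $(n-1)\alpha=0$ with $\alpha\neq 0$ forces $n-1\ge 2$. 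The new $(1,1)$-entry is $s^{\intercal}c=s_1=0$ while the new lower block is $\alpha I_{n-1}-c\,s^{\intercal}$, which has a nonzero off-diagonal entry and is therefore not scalar, so the induction can proceed. With this patch your approach goes through; as written, however, the step you yourself call ``the main obstacle'' is only gestured at.
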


\begin{proof}
	We proceed by induction on $d$. The lemma is obviously true for $d=1$, so assume that $d > 1$ and that the lemma is true for $d-1$. Let $a \in \slo{d}$ be a $d \times d$ matrix with trace $0$.
	
	First, consider the case where there exists a nonzero vector $v \in F^d$ that is not an eigenvector of $a$. Then we can extend the linearly independent set $\{v,av\}$ to a basis of the space $F^d$; let $q \in M_d(F)$ be the transition matrix (from the standard basis to the new one). Then
	\[
	qaq^{-1}= \begin{pmatrix} 0& x^\intercal \\ y & b\end{pmatrix}
	\]
	for some $x,y \in F^{d-1}$ and $b \in M_{d-1}(F)$. Since $\tr(b) = \tr(a) = 0$, by the induction hypothesis, there exists a matrix $r \in M_d(F)$ such that $rbr^{-1}$ is a hollow matrix. Thus, for
	\[
	p = \begin{pmatrix} 1 &  \\  & r \end{pmatrix} \begin{pmatrix} q &  \\  & 1 \end{pmatrix} \in M_{d+1}(F) \text{,}
	\]
	the matrix $pap^{-1} \in M_{d+1}(F)$ is hollow.
	
	Now, assume that all nonzero vectors are eigenvectors of $a$. This implies that $a$ is a scalar multiple of the identity matrix---without loss of generality we may assume $a = {\rm I}$. Since $0 = \tr(a) = d$, the characteristic of $F$ divides $d$. Let $e_1,\ldots,e_{d+1} \in F^{d+1}$ be the standard basis of $F^{d+1}$. The vectors $f_i = e_i + e_{d+1}$, $i=1,\ldots,d$, and $f_{d+1} = e_{d+1} - \sum_{j=1}^d e_j$ form a basis, since $$e_i = -\sum_{\substack{j=1 \\ j \neq i}}^{d+1} f_j \text{,}$$
	for $i=1,\ldots,d$, and $e_{d+1} = \sum_{j=1}^{d+1} f_j$. Let $p \in M_{d+1}(F)$ be the transition matrix (from the standard basis to the new one). Since $a e_i = e_i$ for $i=1,\ldots,d$ and $a e_{d+1} = 0$, we have
	\[
	a f_i = e_i = - \sum_{\substack{j=1 \\ j \neq i}}^{d+1} f_j \text{,}
	\]
	for $i=1,\ldots,d$, and
	\[
	a f_{d+1} = - \sum_{i=1}^d e_i = \sum_{i=1}^d \sum_{\substack{j=1 \\ j \neq i}}^{d+1} f_j = -\sum_{j=1}^d f_j \text{.}
	\]
	Therefore, the matrix $p a p^{-1} \in M_{d+1}(F)$ is hollow.
\end{proof}

We will now establish the basis of our induction. 
\begin{lemma} \label{baza}
	For a nonzero admissible partially commutative polynomial
	$$f \in \pcp{F}{X_1}{\Yo_\om \mid \om \in \Om} \text{,}$$
	we have
	$\slo{d} \subseteq f(M_{d+1}(F))$.
\end{lemma}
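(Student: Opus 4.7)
The plan is to unpack the structure of admissible polynomials in the one-variable case, and then to realize any element of $\slo{d}$ as a value of $f$ by combining Lemma \ref{zabavna lema} with an elementary commutator-with-a-diagonal-matrix calculation.

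First I would observe that when $n=1$ both $S_1$ and $\AAA{\Om}$ are singletons: the only partition of $\Om$ into one strictly increasing sequence is $\aaa^1 = (\om_1, \ldots, \om_m)$ with $\om_1 < \cdots < \om_m$ the elements of $\Om$. Hence every admissible $f$ in one noncommuting variable has the form
\[
f \;=\; \lambda\,[\Yo_{\om_1}, \Yo_{\om_2}, \ldots, \Yo_{\om_m}, X_1]
\]
for some $\lambda \in F$, and $f \neq 0$ forces $\lambda \neq 0$. The case $m=0$ is trivial since then $f(M_{d+1}(F)) = M_{d+1}(F)$, so I may assume $m \geq 1$.

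Next, given a target $a \in \slo{d}$ embedded in $M_{d+1}(F)$, I would apply Lemma \ref{zabavna lema} to produce an invertible $p \in M_{d+1}(F)$ such that $h := p(\lambda^{-1} a) p^{-1}$ is hollow. Since $F$ is infinite I can pick pairwise distinct scalars $c_1, \ldots, c_{d+1} \in F$ and set $D = \diag(c_1, \ldots, c_{d+1})$. A direct entrywise computation shows that for any $Y \in M_{d+1}(F)$,
\[
[D,\,\underbrace{D, \ldots, D}_{m-1},\,Y]_{ij} \;=\; (c_i - c_j)^m Y_{ij},
\]
so choosing $Y$ hollow with $Y_{ij} = h_{ij}/(c_i - c_j)^m$ for $i \neq j$ gives $[D, \ldots, D, Y] = h$.

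Finally I would evaluate $f$ by setting $X_1 = p^{-1} Y p$ and $\yo_{\om_i} = p^{-1} D p$ for $i=1,\ldots,m$; these are all equal and hence commute, so this is a legitimate evaluation. Using $[p^{-1}zp, p^{-1}wp] = p^{-1}[z,w]p$ repeatedly,
\[
f(X_1, \yo_{\om_1}, \ldots, \yo_{\om_m}) \;=\; \lambda\, p^{-1}[D, \ldots, D, Y]\, p \;=\; \lambda\, p^{-1} h\, p \;=\; a,
\]
which is exactly what is needed. I do not anticipate a real obstacle: the only delicate point is that in prime characteristic a trace-zero matrix need not be conjugate to a hollow one in the same size, and this is precisely what Lemma \ref{zabavna lema} fixes by enlarging $d$ to $d+1$.
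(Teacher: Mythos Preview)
Your proposal is correct and follows essentially the same approach as the paper: identify $f$ as $\lambda\,[\Yo_{\om_1},\ldots,\Yo_{\om_m},X_1]$, use Lemma~\ref{zabavna lema} to pass to a hollow conjugate, take all $\Yo_\om$ to be a single diagonal matrix with distinct entries, solve the resulting entrywise equations, and conjugate back. The only cosmetic difference is that you absorb $\lambda^{-1}$ into the matrix before conjugating rather than into $X_1$ afterwards, which is immaterial.
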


\begin{proof}
	Let $\Om = \{\om_1,\ldots,\om_m\}$ with $\om_1<\ldots<\om_m$. As $n=1$, we have
	$\AAA{\Om} = \{ (\omz)\}$ with $\omz = (\om_1,\ldots,\om_m)$.
	Thus,
	\[
	f = \lambda X_1^{(\omz)} = \lambda [\Yo_{\om_1}, \ldots, \Yo_{\om_m}, X_1]
	\]
	for some nonzero $\lambda \in F$. Let $a \in \slo{d}$ be a trace zero matrix. By Lemma \ref{zabavna lema}, there exists an invertible matrix $p \in M_{d+1}(F)$ such that $pap^{-1} \in M_{d+1}(F)$ is hollow. Since $F$ is infinite, we can take $\yo = \diag(\alpha_1,\ldots,\alpha_{d+1}) \in M_{d+1}(F)$ with pairwise distinct $\alpha_i \in F$. Since, for $y \in M_{d+1}(F)$, we have $[u,y]_{ij} = (\alpha_i - \alpha_j) y_{ij}$, there exists a matrix $x \in M_{d+1}(F)$ such that
	\[
	pap^{-1} = [\underbrace{\yo,\ldots,\yo}_m,x] \text{.}
	\]
Hence, the  image of $f$ contains
	\begin{align*}
		\Ev{\lambda^{-1} p^{-1}xp}{p^{-1} \yo p,\ldots,p^{-1} \yo p}(f) = \lambda p^{-1}[\underbrace{\yo,\ldots,\yo}_m, \lambda^{-1} x]p = a \text{.}
	\end{align*}
	This proves that  $\slo{d} \subseteq f(M_{d+1}(F))$.
\end{proof}

Before making the induction step, we will prove two lemmas. The first one is just an elementary observation involving the standard matrix units $e_{ij}$.

\begin{lemma} \label{ikomlema} Let $A$ be an arbitrary unital algebra, let
	$k \in \N$, and let $$v = \sum_{i=1}^{k} e_{i,i+1} + e_{k+1,1} \in M_{k+1}(A).$$ Then 
		\[
	[\underbrace{v,\ldots,v}_{j}, e_{k+1,1}] = \sum_{s=0}^j (-1)^{s} {j \choose s} e_{k+1-j+s,1+s} 
	\]
	for every $j=0,1,\ldots,k$.
	In particular, for $a \in A$, we have
	\[
	[\underbrace{v,\ldots,v}_{k}, a e_{k+1,1}] = \diag(a,*,\ldots,*) \text{.}
	\]
\end{lemma}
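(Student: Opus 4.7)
The plan is to prove the general identity for $[v,\ldots,v,e_{k+1,1}]$ by induction on $j$, and then deduce the ``in particular'' statement by specialization to $j=k$.

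First I would record how multiplication by $v$ acts on matrix units. A direct computation shows that for $1 \le p, q \le k+1$,
\[
v \cdot e_{pq} = \begin{cases} e_{p-1,q} & p > 1, \\ e_{k+1,q} & p = 1, \end{cases} \qquad e_{pq} \cdot v = \begin{cases} e_{p,q+1} & q \le k, \\ e_{p,1} & q = k+1. \end{cases}
\]
So left multiplication by $v$ shifts the row index down by $1$ cyclically, and right multiplication shifts the column index up by $1$ cyclically. In particular, $[v, e_{pq}] = e_{p-1,q} - e_{p,q+1}$ whenever neither index wraps around.

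Next I would prove the formula by induction on $j$, where $0 \le j \le k$. The base case $j=0$ is the identity $e_{k+1,1} = e_{k+1,1}$. Writing $w_j$ for the left-hand side, the recursion is $w_j = [v,w_{j-1}]$, and by the inductive hypothesis
\[
w_{j-1} = \sum_{s=0}^{j-1}(-1)^s\binom{j-1}{s} e_{k+1-(j-1)+s,\,1+s}.
\]
Because $j \le k$, the indices that appear in both $e_{p-1,q}$ and $e_{p,q+1}$ stay within the range $\{1,\ldots,k+1\}$, so no cyclic wrap-around occurs when we apply $[v, \cdot]$ to each term. After reindexing one of the two resulting sums by $s \mapsto s+1$, the coefficient of $e_{k+1-j+s,1+s}$ for $0 < s < j$ becomes $(-1)^s\bigl[\binom{j-1}{s}+\binom{j-1}{s-1}\bigr]$, which equals $(-1)^s\binom{j}{s}$ by Pascal's identity; the boundary terms $s=0$ and $s=j$ also match. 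This gives the claimed formula.

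Finally, for the ``in particular'' statement, I would observe that the entries of $v$ lie in the unit subring generated by $1_A$, so $v$ and $a\,e_{k+1,1}$ satisfy $v \cdot (a e_{pq}) = a(v e_{pq})$ and $(a e_{pq})\cdot v = a(e_{pq} v)$; hence the scalar $a$ passes unchanged through every commutator with $v$. Specializing the proved identity to $j=k$ thus yields
\[
[\underbrace{v,\ldots,v}_{k}, a e_{k+1,1}] = \sum_{s=0}^{k}(-1)^s\binom{k}{s} a\,e_{1+s,\,1+s},
\]
a diagonal matrix whose first entry is $(-1)^0\binom{k}{0}a = a$, as required. There is no real obstacle here; the main thing to be careful about is verifying that the range $j\le k$ is exactly what prevents the cyclic indices from wrapping around and spoiling the clean $[v,e_{pq}] = e_{p-1,q}-e_{p,q+1}$ formula used in the induction.
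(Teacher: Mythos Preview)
Your proposal is correct and follows essentially the same approach as the paper: induction on $j$, computing $[v,e_{pq}]$ via the shift formulas, reindexing one of the two resulting sums, and applying Pascal's identity. Your explicit remark about why the restriction $j\le k$ prevents the cyclic wrap-around is a nice clarification that the paper leaves implicit.
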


\begin{proof}
	We proceed by induction on $j$. The lemma is obviously true for $j=0$, so assume that $0<j \leq k$ and that the lemma is true for $j-1$. Using the induction hypothesis, we have
	\begin{align*}
		[\underbrace{v,\ldots,v}_{j}, e_{k+1,1}] &= [v, [\underbrace{v,\ldots,v}_{j-1}, e_{k+1,1}]]  \\
		&= \left[ v, \sum_{s=0}^{j-1} (-1)^{s} {j-1 \choose s} e_{k+1-(j-1)+s,1+s} \right] \\
		&= \sum_{s=0}^{j-1} (-1)^{s} {j-1 \choose s} [v,e_{k+2-j+s,1+s}] \text{.}
	\end{align*}
	Note that
	\begin{align*}
	v e_{k+2-j+s,1+s} &= e_{k+1-j+s,1+s}\\
	e_{k+2-j+s,1+s}v &= e_{k+2-j+s,2+s} \text{.}
	\end{align*}
	Hence,
	\begin{align*}
		[\underbrace{v,\ldots,v}_{j}, e_{k+1,1}]
		&= \sum_{s=0}^{j-1} (-1)^{s} {j-1 \choose s} \left( e_{k+1-j+s,1+s} - e_{k+2-j+s,2+s} \right) \\
		&= \sum_{s=0}^{j-1} (-1)^{s} {j-1 \choose s} e_{k+1-j+s,1+s} + \sum_{s=0}^{j-1} (-1)^{s+1} {j-1 \choose s} e_{k+2-j+s,2+s} \text{.}
	\end{align*}
	By changing the index of summation in the second sum, we see that
	\begin{align*}
		[\underbrace{v,\ldots,v}_{j}, e_{k+1,1}]
		&= \sum_{s=0}^{j-1} (-1)^{s} {j-1 \choose s} e_{k+1-j+s,1+s} + \sum_{s=1}^{j} (-1)^{s} {j-1 \choose s-1} e_{k+1-j+s,1+s} \\
		&= e_{k+1-j,1} + \sum_{s=1}^{j-1} (-1)^{s} \left({j-1 \choose s} +  {j-1 \choose s-1} \right) e_{k+1-j+s,1+s}\\ &+ (-1)^j e_{k+1,1+j} \text{.}
	\end{align*}
	Using
	\[
	{j-1 \choose s} + {j-1 \choose s-1} = {j \choose s}
	\]
	we obtain the conclusion of the lemma.
\end{proof}

The next lemma  will enable us to reduce the number of noncommutative variables in a suitable way.

\begin{lemma} \label{x v y}
	Let	$f \in \pcp{F}{X_1,\ldots,X_n}{\Yo_\om \mid \om \in \Om}$
	(with $n\geq 2$) be an admissible polynomial of the form
	\[
	f = \sum_{\sigma \in S_{n-1}} \sum_{j = 1}^n \sum_{\aaa \in \AAA{\Om}} \lama{\sigma}{\aaa}{j} \left( X_{\sigma(1)} \ldots X_{\sigma(j-1)} X_{n} X_{\sigma(j)} \ldots X_{\sigma(n-1)} \right)^{\aaa} \text{.}
	\]
	Let $k \in \N_0$ be such that for every $\aaa \in \AAA{\Om}$, $\lngth{\aaa^n} < k$ implies $\lama{\sigma}{\aaa}{j} = 0$ for every $\sigma \in S_{n-1}$ and every $j \in \{1, \ldots, n\}$. Let $\omz = (\om_1,\ldots,\om_k)$ be a part of some partition of $\Om$. Set $\Omi = \Om \setminus \{\om_1,\ldots,\om_k\}$ and let $A$ be an arbitrary unital algebra. Then  the partially commutative polynomial
	$$g \in \pcp{F}{X_1,\ldots,X_{n-1}}{\Yo_\om \mid \om \in \Omi \cup \{n\}}$$ 
	defined by
	\[
	g = \sum_{\sigma \in S_{n-1}} \sum_{j=1}^n \sum_{\aaai \in \AAAi{\Om}}\lama{\sigma}{\aaai}{j} \left( X_{\sigma(1)} \ldots X_{\sigma(j-1)} \right)^{\aaai} \Yo_n \left( X_{\sigma(j)}  \ldots X_{\sigma(n-1)} \right)^{\aaai} \text{,}
	\]
	where $\AAAi{\Om} = \{ \aaai \in \AAA{\Om} \mid \aaai^n = \omz \}$, satisfies
	$g(A)e_{11} \subseteq f(M_{k+1}(A))$.
\end{lemma}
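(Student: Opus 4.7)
My plan is: given any evaluation of $g$ in $A$ yielding $b \in A$---say $b = g(x_1, \ldots, x_{n-1}; \yo_{\om}, \yo_n)$ for arbitrary $x_1, \ldots, x_{n-1} \in A$ and pairwise commuting $\yo_{\om} \in A$ ($\om \in \Omi$), $\yo_n \in A$---I will construct an explicit evaluation of $f$ in $M_{k+1}(A)$ producing $b \cdot e_{11}$. Concretely, I substitute
\[
X_i \mapsto x_i\, e_{11}\ (i < n), \quad X_n \mapsto \yo_n\, e_{k+1,1}, \quad \Yo_{\om_s} \mapsto v\ (1 \leq s \leq k), \quad \Yo_{\om} \mapsto \yo_{\om} \cdot I\ (\om \in \Omi),
\]
with $v$ as in Lemma~\ref{ikomlema}. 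The required commutativity of the $\Yo$-substitutes is immediate: the scalar matrices $\yo_{\om} \cdot I$ commute with $v$ because $v$ has entries in $F \subseteq Z(A)$, and they commute pairwise because the $\yo_{\om}$'s do.

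The crux is identifying which summands of $f$ survive. The key identity is
\[
[\yo_{\om} \cdot I,\; \yo_n\, e_{k+1,1}] = [\yo_{\om}, \yo_n]\, e_{k+1,1} = 0 \qquad (\om \in \Omi),
\]
valid because $\yo_{\om}$ and $\yo_n$ commute in $A$. Since the operators $[\yo_{\om} \cdot I, \cdot]$ and $[v, \cdot]$ pairwise commute on $M_{k+1}(A)$ (their matrix arguments commute), any $[\yo_{\om} \cdot I, \cdot]$ in the nested bracket defining $X_n^{\aaa}$ may be moved innermost, where it annihilates $\yo_n\, e_{k+1,1}$. Hence $X_n^{\aaa} = 0$ whenever $\aaa^n$ meets $\Omi$; combined with the hypothesis $\lngth{\aaa^n} \geq k$ and $\lngth{\omz} = k$, this forces $\aaa^n = \omz$ exactly for surviving terms, and then $\aaa^1, \ldots, \aaa^{n-1}$ automatically form a partition of $\Omi$.

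For these surviving $\aaa$, a straightforward induction based on $[\yo \cdot I, c\, e_{11}] = [\yo, c]\, e_{11}$ gives $X_i^{\aaa} = [\yo_{\aaa^i_1}, \ldots, \yo_{\aaa^i_{k_i}}, x_i] \cdot e_{11}$ for $i < n$, while $X_n^{\aaa} = \yo_n \cdot D_k$ with $D_k := [\underbrace{v,\ldots,v}_{k}, e_{k+1,1}]$, a diagonal matrix with $(D_k)_{11} = 1$ by Lemma~\ref{ikomlema}. The elementary identities $(c\, e_{11})(c'\, e_{11}) = c c'\, e_{11}$ and $(c\, e_{11})(\yo_n D_k)(c'\, e_{11}) = c\, \yo_n\, c'\, e_{11}$ (using $e_{11} D_k = e_{11}$) collapse each surviving summand of $f(\ldots)$ to $\lama{\sigma}{\aaa}{j}\bigl(\prod_{i < j} x_{\sigma(i)}^{\aaa}\bigr)\, \yo_n\, \bigl(\prod_{i \geq j} x_{\sigma(i)}^{\aaa}\bigr) \cdot e_{11}$. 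Summing over $(\sigma, j, \aaa)$ with $\aaa^n = \omz$ then exactly reproduces $g(\ldots) \cdot e_{11}$, giving $f(\ldots) = b \cdot e_{11}$ as required. The main subtlety is the choice $X_i \mapsto x_i\, e_{11}$ rather than the more natural $x_i\, I$: the latter would yield $\diag(b, -k b, \ldots, (-1)^k b)$ instead of $b\, e_{11}$, so it is only the projection onto the $(1,1)$-position built into $x_i\, e_{11}$ that delivers the desired form.
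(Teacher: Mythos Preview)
Your proof is correct and follows essentially the same approach as the paper's own proof: the substitution $X_i \mapsto x_i e_{11}$ for $i<n$, $X_n \mapsto u_n e_{k+1,1}$, $U_{\omega_s} \mapsto v$ for $s\le k$, and $U_\omega \mapsto u_\omega I$ for $\omega\in\widetilde\Omega$ is exactly the one used there, and your analysis of which summands survive and how they collapse to $g(\ldots)e_{11}$ matches the paper's computation. Your observation that the ad-operators of the $U$-substitutes commute, allowing $[u_\omega I,\cdot]$ to be moved innermost, is a slightly more explicit phrasing of the paper's one-line remark that $u_\omega I$ commutes with all the other arguments of the nested bracket; the content is the same.
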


\begin{proof}
	Let $\Om = \{\om_1,\ldots,\om_k,\om_{k+1},\ldots,\om_m\}$. Take $$x_1,\ldots,x_{n-1},\yo_n, \yo_{\om_{k+1}}, \ldots, \yo_{\om_{m}} \in A$$ such that $\yo_n, \yo_{\om_{k+1}}, \ldots, \yo_{\om_{m}}$ commute with each other.
	To prove the lemma we have to find $\ov{x}_1,\ldots,\ov{x}_n,\ov{u}_{\omega_1}, \ldots, \ov{u}_{\omega_m} \in M_{k+1}(A)$ such that
	\begin{equation} \label{zelena enakost}
	\Ev{\ov{x}_1,\ldots,\ov{x}_n}{\ov{\yo}_{\om_1},\ldots,\ov{\yo}_{\om_m}}(f) = \Ev{x_1,\ldots,x_{n-1}}{\yo_n,\yo_{\om_{k+1}},\ldots,\yo_{\om_m}}(g) e_{11}
	\end{equation}
	and $\ov{u}_{\omega_1}, \ldots, \ov{u}_{\omega_m}$ commute.
	
	Set
	\begin{align*}
		&\ov{x}_i = x_i e_{11} \,\, \text{for $i=1,\ldots,n-1$,} \\
		&\ov{x}_n = u_n e_{k+1,1} \text{,}\\
		&\ov{\yo}_{\om_j} = v \,\, \text{for $j=1,\ldots,k$,}\\
		&\ov{\yo}_{\om_l} =  \yo_{\om_l} {\rm I} \,\, \text{for $l=k+1,\ldots,m$,}
	\end{align*}
	where  $v$ is the matrix from Lemma \ref{ikomlema}.
	The matrices $\ov{\yo}_{\om_1},\ldots,\ov{\yo}_{\om_m}$ commute with each other, since so do the elements $\yo_{\om_{k+1}}, \ldots, \yo_{\om_{m}}$.
	We claim that  \eqref{zelena enakost} holds.
	Fix $\sigma \in S_{n-1}$ and $j \in \{1, \ldots, n\}$, and take an $\aaa \in \AAA{\Om}$. Consider the expression $$ \Ev{\ov{x}_1,\ldots,\ov{x}_n}{\ov{\yo}_{\om_1},\ldots,\ov{\yo}_{\om_m}}(\lama{\sigma}{\aaa}{j} X_n^{\aaa}) \text{.}$$
	If $\lngth{\aaa^n} < k$, then, by our assumption, $\lama{\sigma}{\aaa}{j}=0$ and this expression is zero. If the sequence $\aaa^n$ contains $\om_j$ for $j > k$, then
	\begin{align*}
		\Ev{\ov{x}_1,\ldots,\ov{x}_n}{\ov{\yo}_{\om_1},\ldots,\ov{\yo}_{\om_m}}(X_n^{\aaa}) = [\ov{\yo}_{\aaa^n_1},\ldots,\ov{\yo}_{\aaa^n_{\lngth{\aaa^n}}},\ov{x}_n] = 0 \text{,}
	\end{align*}
since $\ov{\yo}_{\om_j} = \yo_{\om_j} {\rm I}$ commutes with all $\ov{\yo}_{\om_s}$ and $\ov{x}_n$.
	Therefore, the above expression can be nonzero only if $\aaa^n$ contains at least $k$ elements from $\{\om_1,\ldots,\om_k\}$, i.e.,
	$\aaa^n = \omz$. In this case, by Lemma \ref{ikomlema},
	\[
	\Ev{\ov{x}_1,\ldots,\ov{x}_n}{\ov{\yo}_{\om_1},\ldots,\ov{\yo}_{\om_m}}(X_n^{\aaa}) = [\underbrace{v,\ldots,v}_k, u_n e_{k+1,1}] = \diag(\yo_n,*,\ldots,*) \text{.}
	\]
	For such an $\aaa$ and $i=1,\ldots,n-1$, we have 
	\begin{align*}
		\Ev{\ov{x}_1,\ldots,\ov{x}_n}{\ov{\yo}_{\om_1},\ldots,\ov{\yo}_{\om_m}}(X_i^{\aaa}) &= [\ov{\yo}_{\aaa^i_1},\ldots,\ov{\yo}_{\aaa^i_{\lngth{\aaa^i}}},\ov{x}_i] \\
		&= [\yo_{\aaa^i_1}{\rm I},\ldots,\yo_{\aaa^i_{\lngth{\aaa^i}}}{\rm I},x_i e_{11}] \\
		&= [\yo_{\aaa^i_1},\ldots,\yo_{\aaa^i_{\lngth{\aaa^i}}},x_i] e_{11} \text{.}
	\end{align*}
Consequently,
	\begin{align*}
		&\Ev{\ov{x}_1,\ldots,\ov{x}_n}{\ov{\yo}_{\om_1},\ldots,\ov{\yo}_{\om_m}}(f)\\
		=\,&\sum_{\sigma \in S_{n-1}} \sum_{j=1}^n \sum_{\aaa \in \AAA{\Om}} \Ev{\ov{x}_1,\ldots,\ov{x}_n}{\ov{\yo}_{\om_1},\ldots,\ov{\yo}_{\om_m}} \left( \left( X_{\sigma(1)} \ldots X_{\sigma(j-1)} \right)^{\aaa} \right)  \\ 
		&\cdot \Ev{\ov{x}_1,\ldots,\ov{x}_n}{\ov{\yo}_{\om_1},\ldots,\ov{\yo}_{\om_m}} \left( \lama{\sigma}{\aaa}{j}  X_n^{\aaa} \right)
		\Ev{\ov{x}_1,\ldots,\ov{x}_n}{\ov{\yo}_{\om_1},\ldots,\ov{\yo}_{\om_m}} \left( \left( X_{\sigma(j)} \ldots X_{\sigma(n-1)} \right)^{\aaa} \right) \\
		=\,& \sum_{\sigma \in S_{n-1}} \sum_{j=1}^n \sum_{\aaai \in \AAAi{\Omega}} \Ev{x_1,\ldots,x_{n-1}}{\yo_n,\yo_{\om_{k+1}},\ldots,\yo_{\om_m}} \left( \left( X_{\sigma(1)} \ldots X_{\sigma(j-1)} \right)^{\aaai} \right) e_{11}  \\ 
		&\cdot \lama{\sigma}{\aaai}{j} \diag(\yo_n,*,\ldots,*)
		\Ev{x_1,\ldots,x_{n-1}}{\yo_n,\yo_{\om_{k+1}},\ldots,\yo_{\om_m}} \left( \left( X_{\sigma(j)} \ldots X_{\sigma(n-1)} \right)^{\aaai} \right) e_{11} \\
		=\,& \Ev{x_1,\ldots,x_{n-1}}{\yo_n,\yo_{\om_{k+1}},\ldots,\yo_{\om_m}} (g) e_{11} \text{.} \qedhere
	\end{align*}
\end{proof}

We are now in a position to prove our main theorem.

\begin{theorem} \label{mt}
	Let $F$ be an infinite field and let $d \in \N$. For every nonzero admissible partially commutative polynomial $f$ there exists an $s \in \N$ such that $\slo{d} \subseteq f(M_{s}(F))$. 
\end{theorem}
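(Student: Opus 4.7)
The plan is to prove Theorem \ref{mt} by induction on the number $n$ of noncommuting variables appearing in $f$. The base case $n = 1$ is precisely Lemma \ref{baza}, which already yields $\slo{d} \subseteq f(M_{d+1}(F))$.

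For the inductive step with $n \geq 2$, I would first rewrite the admissible polynomial $f$ in the form required by Lemma \ref{x v y} by singling out the variable $X_n$. Let $k = \min\{\lngth{\aaa^n} : \lama{\sigma}{\aaa}{j} \neq 0\}$, and pick $\omz = (\om_1, \ldots, \om_k)$ to be a sequence of length $k$ that actually occurs as $\aaa^n$ in some term with nonzero coefficient. Applying Lemma \ref{x v y} then produces a nonzero partially commutative polynomial $g$ satisfying $g(A) e_{11} \subseteq f(M_{k+1}(A))$ for every unital $F$-algebra $A$.

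The main obstacle is that $g$ contains $\Yo_n$ as a standalone factor in each summand, so it is not itself admissible and the inductive hypothesis cannot be applied to $g$ directly. To get around this I would use the identity $\Yo_n X_{\sigma(l)}^{\aaai} = X_{\sigma(l)}^{\aaai}\Yo_n + [\Yo_n, X_{\sigma(l)}^{\aaai}]$; since $\Yo_n$ commutes with every $\Yo_\om$, the commutator simplifies to $X_{\sigma(l)}^{\bbb}$, where $\bbb$ agrees with $\aaai$ except that $n$ has been inserted into $\aaai^{\sigma(l)}$. Moving the standalone $\Yo_n$ in each summand past the $X_{\sigma(l)}^{\aaai}$ to its right yields a decomposition
\[
g = g_A + P \cdot \Yo_n,
\]
where $g_A$ is admissible in $X_1, \ldots, X_{n-1}$ with commuting variables $\Yo_\om$ for $\om \in \Omi \cup \{n\}$ (in every summand of $g_A$, the variable $\Yo_n$ sits inside a nested commutator), and
\[
P = \sum_{\sigma \in S_{n-1}} \sum_{\aaai \in \AAAi{\Om}} \Bigl(\sum_{j=1}^{n} \lama{\sigma}{\aaai}{j}\Bigr)\bigl(X_{\sigma(1)} \ldots X_{\sigma(n-1)}\bigr)^{\aaai}
\]
is admissible in $X_1, \ldots, X_{n-1}$ with commuting variables $\Yo_\om$ for $\om \in \Omi$.

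The monomials supporting $g_A$ (with $\Yo_n$ inside a commutator) and those supporting $P \cdot \Yo_n$ (with $\Yo_n$ as a standalone right factor) form disjoint linearly independent families, so $g \neq 0$ forces at least one of $g_A$, $P$ to be nonzero. If $P \neq 0$, the inductive hypothesis yields $s'$ with $\slo{d} \subseteq P(M_{s'}(F))$; evaluating $g$ with $\Yo_n$ set to the identity matrix annihilates every summand of $g_A$ (each contains a nested commutator of the form $[I, \cdot\,] = 0$) and leaves $P$, so $\slo{d} \subseteq g(M_{s'}(F))$. If instead $P = 0$, then $g = g_A$ is admissible and the inductive hypothesis applies to $g_A$ directly. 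In either case, combining with Lemma \ref{x v y} gives $\slo{d} \subseteq g(M_{s'}(F)) \cdot e_{11} \subseteq f(M_{(k+1)s'}(F))$, completing the induction with $s = (k+1)s'$.
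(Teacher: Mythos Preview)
Your argument is essentially the paper's proof, reorganized as a direct induction rather than a proof by contradiction: your $P$ is exactly the paper's $\pi(g)$, your $g_A$ is the admissible rewriting of $g$ the paper obtains after establishing $\pi(g)=0$, and the two cases you treat correspond to the two stages of the paper's contradiction argument.

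There is one small gap. You assert that Lemma~\ref{x v y} ``produces a nonzero partially commutative polynomial $g$'', but that lemma says nothing about $g$ being nonzero; it only gives the inclusion $g(A)e_{11}\subseteq f(M_{k+1}(A))$. What you actually need is that $P$ and $g_A$ are not \emph{both} zero, and this does not follow from $g\ne 0$ alone but rather the other way around. The paper handles this via its contradiction framing: if $P=0$ then Lemma~\ref{lin neodvisnost} gives $\sum_{j=1}^{n}\lama{\sigma}{\aaai}{j}=0$ for all $\sigma,\aaai$; if in addition $g_A=0$, then (after writing $g_A$ in admissible form over $\Omi\cup\{n\}$ as you describe) Lemma~\ref{lin neodvisnost} gives $\sum_{j=1}^{i}\lama{\sigma}{\aaai}{j}=0$ for every $i\le n-1$; together these force $\lama{\sigma}{\aaai}{j}=0$ for all $\sigma,j$ and all $\aaai$ with $\aaai^n=\omz$, contradicting your choice of $\omz$. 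Inserting exactly this sentence into your proof (in place of the bare claim that $g\ne 0$) closes the gap, and then your argument is complete and matches the paper's.
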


\begin{proof}
	We proceed by induction on $n$, i.e., the number of noncommutative variables $X_1,\ldots,X_n$ involved in $f$. The case where $n=1$ was considered in Lemma \ref{baza}.
	
	Let $n > 1$ and assume the theorem is true for all nonzero admissible partially commutative polynomials in $n-1$ noncommuting variables. We can write $f \in \pcp{F}{X_1,\ldots,X_n}{\Yo_\om \mid \om \in \Om}$ as
	\[
	f = \sum_{\sigma \in S_{n-1}} \sum_{j = 1}^n \sum_{\aaa \in \AAA{\Om}} \lama{\sigma}{\aaa}{j} \left( X_{\sigma(1)} \ldots X_{\sigma(j-1)} X_{n} X_{\sigma(j)} \ldots X_{\sigma(n-1)} \right)^{\aaa} 
	\]
	for some $\lama{\sigma}{\aaa}{j} \in F$, not all zero.
	Suppose the theorem is not true, i.e., $$\slo{d} \not \subseteq f(M_s(F))$$ for all $s \in \N$.

	Let $k $  be the smallest nonnegative integer such that $\lama{\sigma}{\aaa}{j} \neq 0$ for some $\sigma \in S_{n-1}$, $j = 1, \ldots, n$, and $\aaa \in \AAA{\Om}$ with $\lngth{\aaa^n}=k$.
	Note that  $k$ satisfies the assumption of Lemma \ref{x v y}.
	Let $\aaa^n = \omz = (\om_1,\ldots,\om_k)$ be the $n$-th component of a partition $\aaa$ such that $\lama{\sigma}{\aaa}{j} \neq 0$ for some $\sigma \in S_{n-1}$ and $j = 1, \ldots, n$.
Our goal is to prove that for each $i = 1,\ldots, n$, 
	\begin{equation} \label{delna vsota}
		\sum_{j=1}^i \lama{\sigma}{\aaai}{j} = 0
	\end{equation}
	for every $\sigma \in S_{n-1}$ and every $\aaai \in \AAAi{\Om} = \{ \aaai \in \AAA{\Om} \mid \aaai^n = \omz \}$. This  obviously implies $\lama{\sigma}{\aaai}{j} = 0$ for every  $\sigma \in S_{n-1}$, every $\aaai \in \AAAi{\Om}$, and every $j = 1,\ldots, n$, which  contradicts our choice of $\omz$.

	Take $g \in \pcp{F}{X_1,\ldots,X_{n-1}}{\Yo_\om \mid \om \in \Omi \cup \{n\}}$ (recall that $\Omi = \Om \setminus \{\om_1,\ldots,\om_k\}$) defined by
	\[
	g = \sum_{\sigma \in S_{n-1}} \sum_{j=1}^n \sum_{\aaai \in \AAAi{\Om}}\lama{\sigma}{\aaai}{j} \left( X_{\sigma(1)} \ldots X_{\sigma(j-1)} \right)^{\aaai} \Yo_n \left( X_{\sigma(j)}  \ldots X_{\sigma(n-1)} \right)^{\aaai} \text{.}
	\]
	By Lemma \ref{x v y}, we have $g(A)e_{11} \subseteq f(M_{k+1}(A))$ for an arbitrary unital algebra $A$.
	Since $g$ does not involve the variable $X_n$, we can replace $\aaai$ by the $(n-1)$-tuple $\aaa$ obtained by taking the first $n-1$ components of $\aaai$. Such tuples $\aaa$ are exactly the elements of $\AAA{\Omi}$.
	Thus,
	\begin{align*}
		g = \sum_{\sigma \in S_{n-1}} \sum_{j=1}^n \sum_{\aaa \in \AAA{\Omi}}\lama{\sigma}{\aaai}{j} \left( X_{\sigma(1)} \ldots X_{\sigma(j-1)} \right)^{\aaa} \Yo_n \left( X_{\sigma(j)} \ldots X_{\sigma(n-1)} \right)^{\aaa} \text{,}
	\end{align*}
	where $\AAA{\Omi}$ contains $(n-1)$-tuples and $\aaai$ is the $n$-tuple obtained by adding the sequence $\omz$ to the end of $\aaa$.

	Let
	\[
	\pi:\pcp{F}{X_1,\ldots,X_{n-1}}{\Yo_\om \mid \om \in \Omi \cup \{n\}} \rightarrow \pcp{F}{X_1,\ldots,X_{n-1}}{\Yo_\om \mid \om \in \Omi}
	\]
	be the homomorphism that sends $\Yo_n$ to $1$ and fixes each $X_1,\ldots,X_{n-1}$ and the remaining $\Yo_\om$. We have
	\[
	\pi(g) = \sum_{\sigma \in S_{n-1}} \sum_{\aaa \in \AAA{\Omi}} \left( \sum_{j=1}^n \lama{\sigma}{\aaai}{j} \right) \left( X_{\sigma(1)} \ldots X_{\sigma(j-1)} X_{\sigma(j)}  \ldots X_{\sigma(n-1)} \right)^{\aaa} \text{.}
	\]
	Obviously, $\pi(g)(A) \subseteq g(A)$  holds for every unital algebra $A$. We claim that $\pi(g) = 0$. Indeed, if this was not true, then, 
	since $\pi(g)$ is an admissible partially commutative polynomial in $n-1$ noncommuting variables, it would follow from
	the induction hypothesis that there exists an $s \in \N$ such that
	\begin{align*}
	\slo{d} &\subseteq \pi(g)(M_s(F)) \subseteq g(M_s(F)) = g(M_s(F)) e_{11} \\ &\subseteq f(M_{k+1}(M_s(F))) = f(M_{(k+1)s}(F)) \text{,}
	\end{align*}
	which contradicts our initial assumption.
	Now, Lemma \ref{lin neodvisnost} implies  \eqref{delna vsota} for $i=n$.

	Using the just proven equality $\lama{\sigma}{\aaai}{n} = - \sum_{j=1}^{n-1} \lama{\sigma}{\aaai}{j}$, we have
	\begin{align*}
		g &= \sum_{\sigma \in S_{n-1}} \sum_{\aaa \in \AAA{\Omi}} \sum_{j=1}^{n-1} \lama{\sigma}{\aaai}{j} \left( X_{\sigma(1)} \ldots X_{\sigma(j-1)} \right)^{\aaa} \left[ \Yo_n , \left( X_{\sigma(j)}  \ldots X_{\sigma(n-1)} \right)^{\aaa} \right] \text{.}
	\end{align*}
	For $\aaa \in \AAA{\Omi}$, denote by $\ins{\aaa}{i}{n}$ the partition of $\Omii = \Omi \cup \{ n \}$ obtained by adding $n$ to the beginning of the sequence $\aaa^i$. The set $\AAA{\Omii}$ is in bijective correspondence with the disjoint union $\sqcup_{i=1}^{n-1} \AAA{\Omi}$ via $\aaaii = \ins{\aaa}{\sigma(i)}{n}$ (for some fixed permutation $\sigma$).
	By definition, we have
	\begin{align*}
		\left[ \Yo_n, X_{\sigma(i)}^{\aaa} \right]= X_{\sigma(i)}^{\ins{\aaa}{\sigma(i)}{n}} \text{,}
	\end{align*}
	and thus, by using the formula $[X, YZ] = [X, Y]Z + Y[X,Z]$ several times,
	\begin{align*}
		&\left[ \Yo_n , \left( X_{\sigma(j)}  \ldots X_{\sigma(n-1)} \right)^{\aaa} \right]\\
		=\,& \sum_{i = j}^{n-1} \left( X_{\sigma(j)}  \ldots X_{\sigma(i-1)} \right)^{\aaa} \left[ \Yo_n , X_{\sigma(i)}^{\aaa} \right] \left( X_{\sigma(i+1)}  \ldots X_{\sigma(n-1)} \right)^{\aaa} \\
		=\,& \sum_{i = j}^{n-1} \left( X_{\sigma(j)} \ldots X_{\sigma(n-1)} \right)^{\ins{\aaa}{\sigma(i)}{n}} \text{.}
	\end{align*}
	Therefore,
	\begin{align*}
		g&= \sum_{\sigma \in S_{n-1}} \sum_{\aaa \in \AAA{\Omi}} \sum_{j=1}^{n-1} \lama{\sigma}{\aaai}{j} \left( X_{\sigma(1)} \ldots X_{\sigma(j-1)} \right)^{\aaa} \sum_{i = j}^{n-1} \left( X_{\sigma(j)} \ldots X_{\sigma(n-1)} \right)^{\ins{\aaa}{\sigma(i)}{n}} \\
		&= \sum_{\sigma \in S_{n-1}} \sum_{\aaa \in \AAA{\Omi}} \sum_{j=1}^{n-1} \sum_{i = j}^{n-1} \lama{\sigma}{\aaai}{j} \left( X_{\sigma(1)} \ldots X_{\sigma(n-1)} \right)^{\ins{\aaa}{\sigma(i)}{n}}\text{.}
	\end{align*}
	By changing the order of summation and using the aforementioned bijective correspondence, we obtain
	\begin{align*}
		g &= \sum_{\sigma \in S_{n-1}} \sum_{i = 1}^{n-1} \sum_{\aaa \in \AAA{\Omi}} \left( \sum_{j=1}^{i} \lama{\sigma}{\aaai}{j} \right) \left( X_{\sigma(1)} \ldots X_{\sigma(n-1)} \right)^{\ins{\aaa}{\sigma(i)}{n}} \\
		 &= \sum_{\sigma \in S_{n-1}} \sum_{\aaaii \in \AAA{\Omii}} \left( \sum_{j=1}^{i} \lama{\sigma}{\aaai}{j} \right) \left( X_{\sigma(1)} \ldots X_{\sigma(n-1)} \right)^{\aaaii}.
	\end{align*}
	Here, $\sigma(i)$ is the uniquely determined component of $\aaaii$ which contains $n$, $\aaa$ is the $(n-1)$-tuple obtained by omitting  $n$ in the $\sigma(i)$th component of the sequence $\aaaii$, and $\aaai$ is the $n$-tuple obtained from $\aaa$ as before---$i$ and $\aaai$ thus only depend on $\sigma$ and $\aaaii$.
	We have $g=0$, since otherwise, by the induction hypothesis ($g$ is an admissible partially commutative polynomial in $n-1$ noncommuting variables), there would exist an $s \in \N$ such that
	\[
	\slo{d} \subseteq g(M_s(F)) \subseteq f(M_{(k+1)s}(F)) \text{.}
	\]
	Now, Lemma \ref{lin neodvisnost} implies  \eqref{delna vsota} for every $i = 1,\ldots, n-1$. As the $i=n$ case was established earlier, this concludes the proof.
\end{proof}

Since multilinear noncommutative polynomials are special
examples of admissible partially commutative polynomials, Theorem \ref{mt} directly implies Theorem \ref{lep izrek}.

\section*{Acknowledgment}

The author would like to thank his supervisor Matej Bre\v sar for his continued interest, generous encouragement, and invaluable assistance in improving this paper.

\end{document}